\theoremstyle{plain}
\newtheorem{theorem}{Theorem}[section]
\newtheorem{lemma}[theorem]{Lemma}
\newtheorem{corollary}[theorem]{Corollary}
\newtheorem{questions}[theorem]{Questions}
\newcommand{\CC}{{\mathbb C}}
\newcommand{\DD}{{\mathbb D}}
\newcommand{\cD}{{\mathcal D}}
\newcommand{\cH}{{\mathcal H}}
\newcommand{\cM}{{\mathcal M}}
\let\Re\undefined
\DeclareMathOperator{\Re}{Re}
\DeclareMathOperator{\hol}{\mathrm Hol}
\begin{document}

\title{A Gleason--Kahane--\.Zelazko  theorem for the Dirichlet space}

\author[J. Mashreghi]{Javad Mashreghi}
\address{D\'epartement de math\'ematiques et de statistique, Universit\'e Laval, 
Qu\'ebec City (Qu\'ebec),  Canada G1V 0A6.}
\email{javad.mashreghi@mat.ulaval.ca}

\author[J. Ransford]{Julian Ransford}
\address{D\'epartement de math\'ematiques et de statistique, Universit\'e Laval, 
Qu\'ebec City (Qu\'ebec),  Canada G1V 0A6.}
\email{julian.ransford.1@ulaval.ca}

\author[T. Ransford]{Thomas Ransford}
\address{D\'epartement de math\'ematiques et de statistique, Universit\'e Laval, 
Qu\'ebec City (Qu\'ebec),  Canada G1V 0A6.}
\email{thomas.ransford@mat.ulaval.ca}

\thanks{JM supported by an NSERC Discovery Grant. 
JR supported by an NSERC CGS-M Scholarship.
TR supported by an NSERC Discovery Grant and a Canada Research Chair}

\date{16 October 2017}

\begin{abstract}
We show that every linear functional on the Dirichlet space that is non-zero on nowhere-vanishing functions is necessarily a multiple of a point evaluation. Continuity of the functional is not assumed. As an application, we obtain a characterization of weighted composition operators on the Dirichlet space as being exactly those linear maps that send nowhere-vanishing functions to nowhere-vanishing functions.

We also investigate possible extensions to weighted Dirichlet spaces with superharmonic weights. As part of our investigation, we are led to determine which of these spaces contain functions that map the unit disk onto the whole complex plane.
\end{abstract}

\keywords{Dirichlet space, superharmonic weight, linear functional, weighted composition operator}

\subjclass[2010]{primary 47B32; secondary 47B33}

\maketitle

%%%%%%%%%%%%%%%%%%%%%%%%%%%%%%%%%%%%=

\section{Introduction}\label{S:intro}

Let $\DD$ denote the open unit disk, 
and $\hol(\DD)$ denote the set of holomorphic functions on $\DD$.
Given $f\in\hol(\DD)$, we define its Dirichlet integral by
\[
\cD(f):=\frac{1}{\pi}\int_\DD|f'(z)|^2\,dA(z).
\]
The Dirichlet space $\cD$ consists of those $f\in\hol(\DD)$ for which $\cD(f)<\infty$. 
It is easy to see that $\cD$ is contained in the Hardy space $H^2$, and that it becomes a 
Hilbert space when endowed with the norm $\|\cdot\|_\cD$ defined by
\[
\|f\|_\cD^2:=\|f\|_{H^2}^2+\cD(f).
\]
For further information on the Dirichlet space we refer to the book \cite{EKMR14}.

Our main result is the following theorem.

\begin{theorem}\label{T:D}
Let  $\Lambda:\cD\to\CC$ be a linear functional  
such that $\Lambda(1)=1$ and  $\Lambda(g)\ne0$ for all nowhere-vanishing functions $g\in \cD$.
Then there exists $a\in \DD$ such that
$\Lambda(f)=f(a)$ for all $f\in\cD$.
\end{theorem}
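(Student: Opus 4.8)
The plan is to first extract from the hypothesis the ``spectral'' containment
\[
\Lambda(f)\in f(\DD)\qquad(f\in\cD),
\]
obtained by observing that if $\lambda\notin f(\DD)$ then $f-\lambda$ is nowhere-vanishing, so $\Lambda(f)-\lambda=\Lambda(f-\lambda)\ne0$. Taking $f(z)=z$ yields a point $a:=\Lambda(z)\in\DD$, which will be the sought evaluation point. A first consequence is the sup-norm estimate $|\Lambda(g)|\le\|g\|_\infty$ for every bounded $g\in\cD$: applying the containment to $1+tg$, which is nowhere-vanishing once $|t|\,\|g\|_\infty<1$, gives $\Lambda(g)\ne -1/t$, whence $|\Lambda(g)|\le\|g\|_\infty$.

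Next I would run a Gleason--Kahane--\.Zelazko argument on the unital algebra $M:=\cD\cap H^\infty$, which contains the polynomials and is dense in $\cD$. For $g\in M$ and $\zeta\in\CC$ the function $e^{\zeta g}$ is again in $M$ (it is bounded and $\cD(e^{\zeta g})\le e^{2|\zeta|\|g\|_\infty}\cD(g)$) and is nowhere-vanishing, so $\Phi(\zeta):=\Lambda(e^{\zeta g})$ is well-defined and never zero. Using the sup-norm estimate to pass $\Lambda$ through the sup-norm convergent exponential series, one gets $\Phi(\zeta)=\sum_{n\ge0}\Lambda(g^n)\zeta^n/n!$, an entire function with $|\Phi(\zeta)|\le e^{|\zeta|\,\|g\|_\infty}$ and $\Phi(0)=1$. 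By Hadamard's factorization a zero-free entire function of exponential type with $\Phi(0)=1$ must equal $e^{\Lambda(g)\zeta}$; comparing Taylor coefficients gives $\Lambda(g^n)=\Lambda(g)^n$, and polarization yields $\Lambda(gh)=\Lambda(g)\Lambda(h)$ for all $g,h\in M$. Thus $\Lambda|_M$ is a character; since $\Lambda(z-a)=0$ and each $g\in M$ factors as $g-g(a)=(z-a)k$ with $k\in M$, multiplicativity forces $\Lambda(g)=g(a)$ on $M$.

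It remains to remove boundedness, and this is where the main difficulty lies, since $\Lambda$ is not assumed continuous and $\|\cdot\|_\infty$ does not control $\|\cdot\|_\cD$. I would argue by contradiction: fix $f\in\cD$ and suppose $L:=\Lambda(f)\ne f(a)$. For any bounded $g\in\cD$ with $g(a)=0$ we have $\Lambda(f+g-L)=\Lambda(f)+g(a)-L=0$, so by hypothesis $f+g-L$ must vanish somewhere; equivalently $f+g$ must take the value $L$. The strategy is to construct a \emph{single} such $g$ for which $f+g$ omits $L$, producing the contradiction. Writing $F:=f-L$ (so $F(a)\ne0$), this amounts to perturbing $F$ by a bounded function, fixing its value at $a$, so as to eliminate all its zeros.

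The construction I have in mind is as follows. Since $\cD(F)$ equals the area of $F(\DD)$ counted with multiplicity, $F(\DD)$ has finite area; choosing $c$ with $\operatorname{dist}(c,F(\DD))=\delta>0$, the function $1/(F-c)$ is bounded and lies in $M$. Setting $\gamma_0:=(F(a)-c)\log\frac{F(a)}{F(a)-c}$ and
\[
G:=(F-c)\,\exp\!\Big(\frac{\gamma_0}{F-c}\Big),
\]
one checks that $G$ is nowhere-vanishing, that $G(a)=F(a)$, that $G-F$ is bounded, and, since $G'=e^{\gamma_0/(F-c)}F'\bigl(1-\gamma_0/(F-c)\bigr)$, that $\cD(G)\le C\,\cD(F)<\infty$; hence $g:=G-F$ is the required bounded perturbation and $f+g=G+L$ omits $L$. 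The one point that genuinely needs justification---and which I expect to be the crux---is the existence of an omitted value $c$ at positive distance from $F(\DD)$, that is, that functions in $\cD$ cannot have dense image. The finite-area bound immediately rules out $F$ being \emph{onto} $\CC$, and upgrading this to non-density is precisely the surjectivity question flagged in the abstract; once it is in hand, the contradiction delivers $\Lambda(f)=f(a)$ for every $f\in\cD$.
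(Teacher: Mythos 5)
Your argument up to and including the identity $\Lambda(g)=g(a)$ for all $g\in\cD\cap H^\infty$ is sound, and it is genuinely more elementary than the paper's route (which invokes the module-theoretic Gleason--Kahane--\.Zelazko theorem of \cite{MR15} together with the Aleman--Hartz--McCarthy--Richter factorization). The problem is the final step. You have correctly located where the difficulty sits, but you have misdiagnosed it. Your construction of the zero-free perturbation $G$ requires a value $c$ with $\operatorname{dist}(c,F(\DD))>0$, i.e.\ that $F(\DD)$ is \emph{not dense} in $\CC$. You describe this as ``precisely the surjectivity question flagged in the abstract,'' but it is not: non-surjectivity is far weaker than non-density, and only the former follows from the finite-area bound $\operatorname{Area}(F(\DD))\le\pi\,\cD(F)$ (an open set of finite area can perfectly well be dense). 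In fact non-density is \emph{false} for the Dirichlet space: take a simply connected open set $\Omega\subsetneq\CC$ of finite area that is dense in $\CC$ (for instance a suitably thickened embedded tree whose vertex set is dense in the plane), and let $F$ be a Riemann map of $\DD$ onto $\Omega$. Since $F$ is univalent, $\cD(F)=\operatorname{Area}(\Omega)/\pi<\infty$, so $F\in\cD$, yet $\overline{F(\DD)}=\CC$ and no admissible $c$ exists. Your contradiction argument therefore breaks down for exactly such $f$, and the gap is not merely an unproved lemma but a false one.

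For comparison, the paper never needs non-density: the finite-area bound is used only to show that every $f\in\cD$ is a sum of two nowhere-vanishing functions (condition (1) of the module theorem), for which a single omitted value suffices. The passage from bounded functions to general $f\in\cD$ is instead achieved through the identity $\Lambda(hf)=\chi(h)\Lambda(f)$ valid for every multiplier $h$ and \emph{every} $f\in\cD$, which is supplied by the abstract module theorem, combined with the factorization $f=h/k$ with $h,k\in\cM(\cD)$ and $k$ zero-free. Note that your character on $\cD\cap H^\infty$ does not obviously substitute for this: from $f=h/k$ one would want $\Lambda(fk)=\Lambda(f)\Lambda(k)$, but your multiplicativity is established only when both factors are bounded, and $f$ need not be. To repair your proof you would need either a module-level version of your multiplicativity statement or some other mechanism (such as the factorization theorem) for reaching unbounded elements of $\cD$.
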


This result can be viewed as a Dirichlet-space analogue of the classical
Gleason--Kahane--\.Zelazko (GKZ) theorem for Banach algebras.
As in the original GKZ theorem, continuity of $\Lambda$ is not assumed.
This result is thus an improvement of a theorem obtained in \cite{MR15}, 
where it was necessary to assume continuity of~$\Lambda$.

A consequence of this theorem is the following characterization of weighted composition operators
on $\cD$. Again, no continuity is assumed.

\begin{theorem}\label{T:wco}
Let $T:\cD\to\hol(\DD)$ be a linear map that maps nowhere-vanishing functions to nowhere-vanishing functions.
Then there exist holomorphic functions $\phi:\DD\to\DD$ and $\psi:\DD\to\CC\setminus\{0\}$
such that 
\[
Tf=\psi.(f\circ\phi) \qquad(f\in\cD).
\]
\end{theorem}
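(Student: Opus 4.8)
The plan is to deduce Theorem~\ref{T:wco} from Theorem~\ref{T:D} by fixing each point of the disk and producing a point evaluation there. So fix $w\in\DD$ and consider the map $f\mapsto (Tf)(w)$, which is a linear functional on $\cD$. If $g\in\cD$ is nowhere-vanishing, then by hypothesis $Tg$ is nowhere-vanishing, so in particular $(Tg)(w)\ne0$. Thus this functional is non-zero on nowhere-vanishing functions, and Theorem~\ref{T:D} applies---provided we also arrange the normalization $\Lambda(1)=1$. The constant function $1$ is nowhere-vanishing, so $(T1)(w)\ne0$ for every $w$; hence $\psi:=T1$ is a holomorphic, nowhere-vanishing function on $\DD$, and we may instead apply Theorem~\ref{T:D} to the normalized functional $\Lambda_w(f):=(Tf)(w)/(T1)(w)$.

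For each fixed $w$, Theorem~\ref{T:D} then yields a point $a\in\DD$ such that $\Lambda_w(f)=f(a)$ for all $f\in\cD$; denote this point by $\phi(w)$. Unwinding the normalization gives
\[
(Tf)(w)=(T1)(w)\,f(\phi(w))=\psi(w)\,f(\phi(w)) \qquad(f\in\cD),
\]
which is exactly the desired weighted-composition formula, pointwise in $w$. It remains to verify that $\phi:\DD\to\DD$ so produced is holomorphic. The point $\phi(w)$ lies in $\DD$ by construction, so only holomorphy (and well-definedness) need attention.

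The cleanest way to extract holomorphy of $\phi$ is to test the identity against the coordinate function $f(z)=z$, which lies in $\cD$. This gives $\psi(w)\phi(w)=(Tz)(w)$, so $\phi(w)=(Tz)(w)/\psi(w)$ is a ratio of two holomorphic functions whose denominator is nowhere-vanishing; hence $\phi$ is holomorphic on $\DD$. (Applying the identity to $f(z)=1$ merely recovers $\psi=T1$, confirming consistency.) One should also note that $\phi$ is automatically well-defined: the value $a=\phi(w)$ furnished by Theorem~\ref{T:D} is unique, since distinct points of $\DD$ are separated by $\cD$ (e.g.\ again by the function $z$), so there is no ambiguity in the choice.

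The main obstacle I anticipate is purely the bookkeeping of the normalization: Theorem~\ref{T:D} requires $\Lambda(1)=1$, so one must divide by $(T1)(w)$ and check that this does not vanish, which is precisely where the hypothesis that $T$ preserves non-vanishing of the constant function $1$ is used. Beyond that, the argument is essentially a pointwise application of the main theorem followed by the observation that holomorphy of $\phi$ and $\psi$ falls out for free by testing against the two functions $1$ and $z$ in $\cD$; no delicate estimates are needed.
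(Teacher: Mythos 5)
Your argument is correct and is essentially identical to the paper's: both normalize by $\psi:=T1$, apply Theorem~\ref{T:D} pointwise to $f\mapsto (Tf)(w)/\psi(w)$ to obtain the point $\phi(w)\in\DD$, and recover holomorphy of $\phi$ by testing against the coordinate function $u(z)=z$, so that $\phi=T(u)/\psi$. No issues.
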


We also seek to extend Theorem~\ref{T:D} to certain weighted Dirichlet spaces.
Given a positive superharmonic function $w$ on $\DD$, we define $\cD_w$
to be the set of $f\in\hol(\DD)$ such that
\[
\cD_w(f):=\frac{1}{\pi}\int_\DD |f'(z)|^2w(z)\,dA(z)<\infty.
\]
The weight $w$ is automatically integrable, so $\cD_w$ contains all polynomials.
One can show that $\cD_w\subset H^2$, and that $\cD_w$ becomes a Hilbert space when endowed with
the norm $\|\cdot\|_{\cD_w}$ defined by
\[
\|f\|_{\cD_w}^2:=\|f\|_{H^2}^2+\cD_w(f).
\]

Obviously, the classical Dirichlet space $\cD$ corresponds to taking $w\equiv1$.
Other interesting examples include the standard weighted Dirichlet spaces $\cD_\alpha$ for $0<\alpha<1$
(obtained by taking $w(z):=(1-|z|^2)^\alpha$),
and the harmonically weighted Dirichlet spaces introduced by Richter in \cite{Ri91} 
and further studied by Richter and Sundberg in \cite{RS91}.
The study of Dirichlet spaces with general superharmonic weights 
was initiated by Aleman in his habilitation thesis \cite{Al93},
where further details on this subject may be found.

We prove:

\begin{theorem}\label{T:Dw}
Let $w$ be a positive superharmonic function on $\DD$.
Let  $\Lambda:\cD_w\to\CC$ be a linear functional  
such that $\Lambda(1)=1$ and  $\Lambda(g)\ne0$ for all nowhere-vanishing functions $g\in \cD_w$.
Then there exists $a\in \DD$ such that
$\Lambda(f)=f(a)$ for all $f\in\cD_w$.
\end{theorem}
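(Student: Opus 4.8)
The plan is to follow the strategy used for the unweighted case (Theorem~\ref{T:D}), exploiting the fact that a positive superharmonic weight $w$ is integrable, so that $\cD_w$ contains all polynomials and, more generally, $e^{\lambda p}\in\cD_w$ for every polynomial $p$ and every $\lambda\in\CC$ (since $p$ and $p'$ are bounded on $\DD$ and $w\in L^1$). The first step is the standard \emph{range condition}. Given $f\in\cD_w$, the function $f-\Lambda(f)\cdot 1$ lies in $\cD_w$ and is annihilated by $\Lambda$ (using $\Lambda(1)=1$); since $\Lambda$ does not vanish on nowhere-vanishing functions, $f-\Lambda(f)$ must have a zero in $\DD$. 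Hence $\Lambda(f)\in f(\DD)$, and in particular $|\Lambda(f)|\le\sup_{z\in\DD}|f(z)|$ whenever the right-hand side is finite. Applying this to the coordinate function $z$, whose range is $\DD$, we obtain a point $a:=\Lambda(z)\in\DD$, the candidate for the point evaluation.

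The second step is to show that $\Lambda$ agrees with $f\mapsto f(a)$ on polynomials. The range condition yields $|\Lambda(p)|\le\sup_{\DD}|p|=\|p\|_\infty$ for every polynomial $p$, so $\Lambda$ restricts to a bounded linear functional on $(\CC[z],\|\cdot\|_\infty)$, which extends uniquely to a bounded functional $\tilde\Lambda$ on the disk algebra $A(\DD)=\overline{\CC[z]}^{\,\|\cdot\|_\infty}$ with $\tilde\Lambda(1)=1$. Because $\tilde\Lambda$ is now sup-norm continuous, the exponential series for $e^{\lambda p}$ may be integrated term by term, and the entire function
\[
F(\lambda):=\tilde\Lambda(e^{\lambda p})=\Lambda(e^{\lambda p})=\sum_{n\ge0}\frac{\lambda^n}{n!}\Lambda(p^n)
\]
is zero-free (each $e^{\lambda p}$ is nowhere-vanishing) and of exponential type, since $|F(\lambda)|\le e^{|\lambda|\,\|p\|_\infty}$ by the range condition. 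As $F(0)=1$, Hadamard factorization forces $F(\lambda)=e^{\lambda\Lambda(p)}$; comparing the coefficients of $\lambda^2$ gives $\Lambda(p^2)=\Lambda(p)^2$, and polarization shows $\tilde\Lambda$ is multiplicative. Thus $\tilde\Lambda$ is a character of $A(\DD)$, so $\Lambda(p)=p(a)$ for every polynomial $p$, with $a=\Lambda(z)\in\DD$ as above. This reproduces the classical Gleason--Kahane--\.Zelazko conclusion on the polynomial subalgebra without assuming continuity of $\Lambda$.

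The final and hardest step is to promote this agreement from the dense subspace $\CC[z]$ to all of $\cD_w$. Here continuity of $\Lambda$ is \emph{not} available, and the range condition does not supply it, because functions in $\cD_w$ need not be bounded, so $\cD_w$-convergence to $0$ need not force the supremum over $\DD$ to tend to $0$. The plan is to use two structural features guaranteed by the superharmonicity of $w$: that $(z-a)$ is a multiplier of $\cD_w$, and that $\cD_w$ is closed under division by $(z-a)$ (if $f\in\cD_w$ and $f(a)=0$ then $f/(z-a)\in\cD_w$). Writing $f(z)=f(a)+(z-a)g(z)$ with $g\in\cD_w$, we have $\Lambda(f)=f(a)+\Lambda\bigl((z-a)g\bigr)$, so it suffices to show that the linear functional $\nu(g):=\Lambda\bigl((z-a)g\bigr)$ vanishes identically; by Step~2 it already vanishes on polynomials, which are dense. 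I expect this annihilation to be the main obstacle: it is equivalent to establishing that $\Lambda$ is continuous on $\cD_w$, and I anticipate resolving it by a quantitative approximation argument specific to $\cD_w$, controlling the remainder $f-p_n$ for polynomial approximants $p_n\to f$ in a way that the multiplier and division estimates make compatible with $\Lambda$ through the range condition applied to the zero-free perturbations $e^{\lambda p}$ and their products with $(z-a)$. Once $\nu\equiv0$ is secured, we conclude $\Lambda(f)=f(a)$ for all $f\in\cD_w$, completing the proof of Theorem~\ref{T:Dw}.
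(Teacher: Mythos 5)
There is a genuine gap, and you have located it yourself: the third step is not a proof but a statement of hope. Your first two steps are sound --- the range condition $\Lambda(f)\in f(\DD)$, the bound $|\Lambda(p)|\le\|p\|_\infty$ on polynomials, and the Hadamard-factorization argument giving $\Lambda(p)=p(a)$ with $a=\Lambda(z)\in\DD$ all work (the term-by-term integration of $e^{\lambda p}$ is justified because the range condition makes $\Lambda$ sup-norm Lipschitz on the \emph{bounded} functions in $\cD_w$, and the partial sums of $e^{\lambda p}$ converge uniformly). But this only controls $\Lambda$ on a sup-norm-closed world, and the whole difficulty of the theorem is that $\cD_w$ contains unbounded functions and $\Lambda$ is not assumed continuous in any topology. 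Showing that $\nu(g):=\Lambda\bigl((z-a)g\bigr)$ vanishes for all $g\in\cD_w$ is not a technicality to be ``resolved by a quantitative approximation argument'': a linear functional can vanish on a dense subspace and still be non-zero, precisely because it is discontinuous, so density of polynomials gives you nothing here. You would need to prove that $\Lambda(hf)=\chi(h)\Lambda(f)$ for $h$ in the multiplier algebra $\cM(\cD_w)$, and that is the entire content of the hard part.

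The paper closes this gap by applying the abstract module version of the Gleason--Kahane--\.Zelazko theorem (Theorem~\ref{T:module}, from \cite{MR15}) with $A=\cM(\cD_w)$, $M=\cD_w$, and $S$ the nowhere-vanishing functions. Verifying hypothesis (3) of that theorem requires writing every nowhere-vanishing $g\in\cD_w$ as $h/k$ with $h,k$ nowhere-vanishing \emph{multipliers}; this rests on the factorization theorem of Aleman--Hartz--McCarthy--Richter (Theorem~\ref{T:AHMR}) combined with Shimorin's theorem that $\cD_w$ has a complete Pick kernel. Hypothesis (1) --- that every $f\in\cD_w$ is a sum of two nowhere-vanishing functions --- is verified via inner--outer factorization and Aleman's result that the outer factor of a function in $\cD_w$ stays in $\cD_w$ (the trick $f=(h-1)g+g$), since unlike $\cD$ the space $\cD_w$ may contain surjective functions, so your ``omit a value $\lambda$'' idea is unavailable. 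Once the module theorem yields the character $\chi$ with $\Lambda(hf)=\chi(h)\Lambda(f)$, your desired identity $\Lambda\bigl((z-a)g\bigr)=\chi(u-a1)\Lambda(g)=0$ falls out immediately. Without an input of this strength, your Step 3 cannot be completed, so the proposal as written does not prove the theorem.
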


An issue that arises in the course of the proof  of this theorem is whether $\cD_w$ contains 
surjective functions, namely functions $f$ such that $f(\DD)=\CC$.
The question of which function spaces on $\DD$ contain surjective functions has been extensively
studied, but  these studies date from before the introduction of the spaces $\cD_w$,
so we believe that it is worth recording the following result explicitly.

\begin{theorem}\label{T:surj}
Let $w$ be a positive superharmonic function on $\DD$.
Then $\cD_w$ contains surjective functions if and only if $\inf_{z\in\DD} w(z)=0$.
\end{theorem}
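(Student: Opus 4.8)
The two implications have very different flavors: one is soft, the other requires an explicit construction, so I would treat them separately.

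\emph{The implication $\inf_\DD w>0\Rightarrow$ no surjections.} If $c:=\inf_\DD w>0$, then $\cD_w(f)\ge c\,\cD(f)$ for every $f\in\hol(\DD)$, whence $\cD_w\subseteq\cD$. It thus suffices to show that the classical Dirichlet space contains no surjective function, and this is immediate from the area formula: for $f\in\cD$,
\[
\cD(f)=\frac1\pi\int_\DD|f'(z)|^2\,dA(z)=\frac1\pi\int_\CC n_f(\zeta)\,dA(\zeta),
\]
where $n_f(\zeta)$ is the number of solutions of $f(z)=\zeta$ in $\DD$, counted with multiplicity. Were $f$ surjective we would have $n_f\ge1$ everywhere, forcing $\cD(f)=\infty$; so no $f\in\cD_w$ can be surjective.

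\emph{The implication $\inf_\DD w=0\Rightarrow$ surjections exist.} Here the plan is to build a surjective $f\in\cD_w$ by a patching argument. Since $w$ is positive and superharmonic, the minimum principle shows the infimum is not attained in $\DD$, so there is a sequence $a_n$ with $|a_n|\to1$ and $w(a_n)\to0$. Passing to a subsequence, I may take the $a_n$ so sparse that the disks $D(a_n,\delta_n)$, with $\delta_n:=\tfrac14(1-|a_n|)$, are pairwise disjoint, and simultaneously that $w(a_n)$ decays as rapidly as I please. The single quantitative input is the sub-mean-value inequality for the superharmonic weight, which gives
\[
\frac1\pi\int_{D(a_n,\delta_n)}w\,dA\le \delta_n^2\,w(a_n).
\]
This is exactly what makes a large derivative on $D(a_n,\delta_n)$ affordable in the $\cD_w$-metric, precisely because $w$ is small there.

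The target is a holomorphic $f$ that on $D(a_n,\delta_n/4)$ mimics the affine dilation $z\mapsto \tfrac{R_n}{\delta_n}(z-a_n)$, which maps $D(a_n,\delta_n/4)$ onto $D(0,R_n/4)$, for some sequence $R_n\to\infty$. If I can arrange $f\big(D(a_n,\delta_n/4)\big)\supseteq D(0,R_n/8)$ for every $n$, then $f(\DD)\supseteq\bigcup_n D(0,R_n/8)=\CC$ and $f$ is surjective. The cost of the model map on $D(a_n,\delta_n)$ is $\tfrac1\pi\int(R_n/\delta_n)^2 w\,dA\le R_n^2\,w(a_n)$, so choosing the subsequence with $w(a_n)\le 2^{-n}R_n^{-2}$ keeps the total Dirichlet cost finite.

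\textbf{The main obstacle} is that holomorphic functions cannot be localized, so the disjoint model maps must be fused into one global function. I would do this by a $\overline{\partial}$-correction: pick smooth cut-offs $\eta_n$ equal to $1$ on $D(a_n,\delta_n/2)$ and supported in $D(a_n,\delta_n)$, set $F:=\sum_n\eta_n\,\tfrac{R_n}{\delta_n}(z-a_n)$ (a legitimate sum since the supports are disjoint), and solve $\overline{\partial}u=\overline{\partial}F$ on $\DD$. Because $\overline{\partial}F$ is supported in the annuli $D(a_n,\delta_n)\setminus D(a_n,\delta_n/2)$, the solution $u$ is holomorphic on each core $D(a_n,\delta_n/4)$, so a sub-mean-value bound controls $\sup_{D(a_n,\delta_n/4)}|u|$ by the $L^2$-norm of $u$ over $D(a_n,\delta_n/2)$; keeping this below $R_n/8$ lets Rouch\'e's theorem guarantee that $f:=F-u$ still covers $D(0,R_n/8)$ on the core. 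The crux is therefore to solve $\overline{\partial}u=\overline{\partial}F$ with simultaneous control of $\|u\|_{L^2(D(a_n,\delta_n/2))}$ on each core and of $\int_\DD|u'|^2w\,dA$, so that $u\in\cD_w$ and hence $f\in\cD_w$. I expect to obtain this from a H\"ormander-type weighted $L^2$ estimate for $\overline{\partial}$, in which superharmonicity of $w$ and the bound $\tfrac1\pi\int_{D(a_n,\delta_n)}w\,dA\le\delta_n^2 w(a_n)$ tame the right-hand side; carrying out these estimates, and upgrading the $L^2$ bound on $u$ to the derivative bound needed for membership in $\cD_w$, is where the real work lies.
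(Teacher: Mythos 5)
Your first implication is correct and is essentially the paper's argument: $\inf_\DD w>0$ forces $\cD_w\subseteq\cD$, and the area formula (image area bounded by the Dirichlet integral) rules out surjections in $\cD$. The second implication, however, is a program rather than a proof, and the part you defer --- ``carrying out these estimates \dots is where the real work lies'' --- is precisely the part that does not follow from any off-the-shelf tool. Concretely: membership of $f=F-u$ in $\cD_w$ requires $\int_\DD|\partial u|^2w\,dA<\infty$, but H\"ormander-type $L^2$ estimates for $\overline{\partial}$ control $u$ itself, not $\partial u$. The natural route to a derivative bound is to take the canonical solution $u=C(\overline{\partial}F)$ and write $\partial u=S(\overline{\partial}F)$ with $S$ the Beurling transform; but boundedness of $S$ on $L^2(w\,dA)$ requires $w$ to be an $A_2$ weight, which a general positive superharmonic function need not be (harmonic weights such as Poisson kernels, which give Richter's local Dirichlet spaces, are already delicate). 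On top of this you need, simultaneously, the sup-norm bound $\sup_{D(a_n,\delta_n/4)}|u|<R_n/8$ for \emph{every} $n$; the sub-mean-value estimate you invoke gives $\sup|u|\lesssim\delta_n^{-1}\|u\|_{L^2(D(a_n,\delta_n/2))}$ with $\delta_n\to0$, so this is not free either. None of these steps is obviously false, but none is carried out, and the $A_2$ issue suggests the $\overline{\partial}$ route may genuinely fail for some admissible weights.

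The paper avoids all of this with a much more elementary construction that you may find instructive to compare. Fix $a\in\partial\DD$ with $a_n\to a$ and $w(a_n)\to0$, take the single bounded function $h(z)=z(a-z)$ and the automorphisms $\phi_n(z)=(a_n-z)/(1-\overline{a}_nz)$. A change of variables plus the superharmonic mean-value inequality applied to $w\circ\phi_n^{-1}$ at the origin gives $\cD_w(h\circ\phi_n)\le 9\,w(a_n)\to0$, and $\|h\circ\phi_n\|_{H^2}\to0$ as well, so $\|h\circ\phi_n\|_{\cD_w}\to0$. A general lemma then produces the surjection as a lacunary sum $f=\sum_n\lambda^n(h\circ\phi_n)$ over a sparse subsequence: for any target $w_0\in\CC$ one term dominates all the others on the boundary of a suitable preimage disk, and Rouch\'e's theorem shows $f$ takes the value $w_0$ there. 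This replaces your gluing-plus-$\overline{\partial}$-correction by summation plus Rouch\'e, which is exactly what makes the holomorphicity and the $\cD_w$-membership of $f$ automatic. If you want to salvage your approach, the honest statement of what remains to be proved is a weighted $\overline{\partial}$ theorem of the form: for $g$ supported in $\bigcup_n\bigl(D(a_n,\delta_n)\setminus D(a_n,\delta_n/2)\bigr)$ there is a solution of $\overline{\partial}u=g$ with $\int_\DD|\partial u|^2w\,dA\lesssim\int_\DD|g|^2w\,dA$ and $\|u\|_{L^\infty(D(a_n,\delta_n/4))}\lesssim\delta_n^{-1}\|g\|_{L^2}$; as it stands, that theorem is neither proved nor quoted.
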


Theorems~\ref{T:D},  \ref{T:wco} and \ref{T:Dw} are  proved in  \S\ref{S:D}. 
The proof of Theorem~\ref{T:surj} is presented in \S\ref{S:surj}.
We conclude   in \S\ref{S:conclusion} with a discussion of the results above and with some open problems.

%%%%%%%%%%%%%%%%%%%%%%%%%%%%%%%%%%

\section{Proofs of Theorems~\ref{T:D}, \ref{T:wco} and \ref{T:Dw}}\label{S:D}

The proof of Theorem~\ref{T:D} is based upon the following abstract result obtained in \cite{MR15}.

\begin{theorem}[\protect{\cite[Theorem~1.2]{MR15}}]\label{T:module}
Let $A$ be a  complex unital Banach algebra, let $M$ be a left $A$-module,
and let $S$ be a non-empty subset of $M$ satisfying the following conditions:
\begin{enumerate}[(1)]
\item $S$ generates $M$ as an $A$-module;
\item if $a\in A$ is invertible and $s\in S$, then $a s\in S$;
\item for all $s_1,s_2\in S$, there exist $a_1,a_2\in A$ such that 
$a_j S\subset S~(j=1,2)$ and $a_1 s_1=a_2 s_2$.
\end{enumerate}
Let $\Lambda:M\to\CC$ be a linear functional such that 
$\Lambda(s)\ne0$ for all $s\in S$. 
Then there exists a unique character $\chi$ on $A$ such that
\[
\Lambda(a m)=\chi(a)\Lambda(m) \qquad(a\in A,~m\in M).
\]
\end{theorem}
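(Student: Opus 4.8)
The plan is to reduce the statement to the classical scalar Gleason--Kahane--\.Zelazko (GKZ) theorem for unital Banach algebras, applied not to $\Lambda$ directly but to a family of functionals on $A$ manufactured from $\Lambda$. Fix $s\in S$; such an $s$ exists since $S\ne\emptyset$, and $\Lambda(s)\ne0$ by hypothesis. Define $\chi_s:A\to\CC$ by $\chi_s(a):=\Lambda(as)/\Lambda(s)$. This is linear and satisfies $\chi_s(1)=1$. The role of condition~(2) is that whenever $a\in A$ is invertible we have $as\in S$, so $\Lambda(as)\ne0$ and hence $\chi_s(a)\ne0$. Thus $\chi_s$ is a linear functional, normalised at the identity and non-vanishing on invertible elements, so the classical GKZ theorem applies and $\chi_s$ is a character on $A$. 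This first step is where the real content lies: it converts the module hypothesis into genuine multiplicativity on $A$.

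Next I would show that $\chi_s$ is independent of the choice of $s\in S$, which is exactly what condition~(3) is designed to guarantee. Given $s_1,s_2\in S$, choose $a_1,a_2\in A$ with $a_jS\subset S$ and $a_1s_1=a_2s_2=:m_0$. Since $s_1\in S$ we have $m_0=a_1s_1\in a_1S\subset S$, so $\chi_{m_0}$ is defined. Using that $\chi_{s_1}$ is a character, for every $a\in A$ one computes $\Lambda(am_0)=\Lambda(aa_1s_1)=\chi_{s_1}(a)\chi_{s_1}(a_1)\Lambda(s_1)$; taking $a=1$ gives $\Lambda(m_0)=\chi_{s_1}(a_1)\Lambda(s_1)$, and in particular $\chi_{s_1}(a_1)\ne0$ because $\Lambda(m_0)\ne0$. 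Dividing the two displays yields $\chi_{m_0}=\chi_{s_1}$, and symmetrically $\chi_{m_0}=\chi_{s_2}$, whence $\chi_{s_1}=\chi_{s_2}$. Denote the common character by $\chi$; by construction $\Lambda(as)=\chi(a)\Lambda(s)$ for all $a\in A$ and all $s\in S$.

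It remains to upgrade this identity from $S$ to all of $M$, which is where condition~(1) enters. Any $m\in M$ can be written as a finite sum $m=\sum_j b_js_j$ with $b_j\in A$ and $s_j\in S$. For $a\in A$, using the relation on $S$ together with the multiplicativity of $\chi$ gives $\Lambda(am)=\sum_j\Lambda((ab_j)s_j)=\sum_j\chi(a)\chi(b_j)\Lambda(s_j)=\chi(a)\sum_j\Lambda(b_js_j)=\chi(a)\Lambda(m)$, as required. Uniqueness is immediate: any character $\chi'$ with $\Lambda(am)=\chi'(a)\Lambda(m)$ must agree with $\chi$ after testing against a fixed $s\in S$ with $\Lambda(s)\ne0$, since then $\chi'(a)\Lambda(s)=\Lambda(as)=\chi(a)\Lambda(s)$.

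I expect the main obstacle to be the first step, namely recognising that condition~(2) is precisely what is needed to invoke the scalar GKZ theorem for each $\chi_s$; once each $\chi_s$ is known to be a character, conditions~(3) and~(1) reduce the remainder to bookkeeping. A secondary point requiring care is the verification that $\chi_{s_1}(a_1)\ne0$ in the gluing step, since $a_1$ need not be invertible and the non-vanishing must be extracted from $\Lambda(m_0)\ne0$ rather than directly from condition~(2).
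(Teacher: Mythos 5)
Your argument is correct. The paper itself gives no proof of this statement---it is quoted from \cite[Theorem~1.2]{MR15}---but your reduction to the scalar Gleason--Kahane--\.Zelazko theorem via the functionals $a\mapsto\Lambda(as)/\Lambda(s)$ (using condition~(2) to see they are non-zero on invertibles), followed by the gluing argument from condition~(3) with the careful extraction of $\chi_{s_1}(a_1)\ne0$ from $\Lambda(a_1s_1)\ne0$, and the extension to $M$ via condition~(1), is precisely the strategy of the cited proof, modulo the standard implicit assumption that the module is unital (so that $\chi_s(1)=1$ and condition~(1) yields finite sums $\sum_j b_js_j$).
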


The plan is to apply this theorem,
 taking $M=\cD$ and $S$ to be the set of nowhere-vanishing functions in $\cD$.
Also, we take $A$ to be $\cM(\cD)$, the multiplier algebra of $\cD$, defined by
\begin{align*}
\cM(\cD)&:=\{h\in\hol(\DD):hf\in\cD \text{~for all~}f\in\cD\},\\
\|h\|_{\cM(\cD)}&:=\sup\{\|hf\|_\cD: f\in\cD,~\|f\|_\cD\le1\}.
\end{align*}
One can show that $\cM(\cD)$ is a Banach algebra
and that $\cM(\cD)\subset\cD\cap H^\infty$.
In fact the inclusion is proper, and though there is an exact characterization of elements of $\cM(\cD)$,
it is not easy to use directly. 

Fortunately, it is also possible to approach $\cM(\cD)$ via the theory
of reproducing kernel Hilbert spaces. Aleman, Hartz, McCarthy and Richter \cite{AHMR17}
recently
obtained the following factorization theorem, based on earlier work of
Alpay, Bolotnikov and Kaptano\u glu \cite{ABK02}.

\begin{theorem}[\protect{\cite[Theorem 1]{AHMR17}}]\label{T:AHMR}
Let $\cH$ be a reproducing kernel Hilbert space whose kernel is normalized and has the complete Pick property.
Then, given $f\in\cH$, there exist $h,k$ in the multiplier algebra of $\cH$,
with $k$ nowhere zero, such that $f=h/k$.
\end{theorem}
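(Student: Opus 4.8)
The plan is to exploit the special structure that the complete Pick property forces on the reproducing kernel, and to build the denominator by a realization (lurking--isometry) argument, taking the classical Smirnov factorization in the Hardy space $H^2$ as the guiding model. Write $K$ for the reproducing kernel of $\cH$ on its underlying set $X$. By the McCullough--Quiggin characterization of complete Pick kernels, the normalization of $K$ lets us write
\[
K(x,y)=\frac{1}{1-\langle b(x),b(y)\rangle}
\]
for some map $b\colon X\to\mathcal{E}$ into the open unit ball of an auxiliary Hilbert space $\mathcal{E}$, with $b(x_0)=0$ at the normalizing point $x_0$. If $f=0$ there is nothing to prove, so after dividing by $\|f\|_\cH$ (which only rescales the eventual numerator) I may assume $\|f\|_\cH\le1$. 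The starting point is the standard membership criterion: $\|f\|_\cH\le1$ holds if and only if the kernel $K(x,y)-f(x)\overline{f(y)}$ is positive semidefinite.

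From these two facts I would run a single lurking--isometry argument. Combining the positivity of $K-f\bar f$ with the identity $1/K=1-\langle b,b\rangle$ yields an isometric relation between two Hilbert spaces built from the data $(K,b,f)$; completing it to a unitary colligation and reading off the associated transfer functions produces, simultaneously, two contractive multipliers $h,k\in\mathrm{Mult}(\cH)$ satisfying $f\,k=h$. This is the abstract substitute for the Hardy-space construction, in which one takes $k$ to be the outer function with boundary modulus $(1+|f|^2)^{-1/2}$ and $h=fk$, so that $|k|\le1$ and $|h|^2=|f|^2/(1+|f|^2)\le1$; here the role of ``bounded on the disk'' is played by ``contractive multiplier'', and the inner product $\langle b,b\rangle$ is what encodes the complete Pick structure that makes the colligation available.

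The main obstacle is to guarantee that the denominator $k$ is zero-free, which is the abstract analogue of the denominator being outer. Among all the contractive realizations furnished by the colligation one must select a distinguished, extremal one: I would arrange the colligation so that the resolvent $(I-(b(x)\otimes I)D)^{-1}$ appearing in the transfer function of $k$ is given by a norm-convergent Neumann series (this uses $\|b(x)\|<1$ for every $x$, i.e.\ precisely that $b$ is ball-valued, which is where the complete Pick property is indispensable), and then choose the extremal data so that $k$ inherits the non-vanishing of this resolvent. Establishing that this extremal choice genuinely yields $k(x)\ne0$ for all $x$ --- rather than merely a cyclic or contractive $k$ --- is the delicate point, and it is exactly here that the hypothesis cannot be weakened to an arbitrary kernel.

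Finally, with $h,k\in\mathrm{Mult}(\cH)$, $k$ nowhere zero, and $f\,k=h$, we obtain $f=h/k$; undoing the initial scaling multiplies $h$ by the constant $\|f\|_\cH$, which keeps $h$ in the multiplier algebra, and the theorem follows.
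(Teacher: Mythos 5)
First, a point of reference: this paper does not prove Theorem~\ref{T:AHMR} at all --- it is quoted verbatim from \cite{AHMR17} and used as a black box --- so your proposal must be measured against the proof in that reference. Your general frame does match it: the McCullough--Quiggin form $K(x,y)=1/(1-\langle b(x),b(y)\rangle)$ of a normalized complete Pick kernel, the membership criterion ($\|f\|_\cH\le 1$ if and only if $K(x,y)-f(x)\overline{f(y)}$ is positive semidefinite), and a lurking-isometry/realization argument are exactly the ingredients of Aleman--Hartz--McCarthy--Richter.

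However, there is a genuine gap at the only step that carries the content of the theorem: the non-vanishing of the denominator. Your mechanism for it --- an unspecified ``extremal'' choice of colligation plus invertibility of the resolvent $(I-(b(x)\otimes I)D)^{-1}$ --- does not work as stated. That resolvent is invertible for \emph{every} contractive colligation (since $\|b(x)\|<1$ and $\|D\|\le 1$), and this only guarantees that the transfer function is well defined at each point; it gives no information about the scalar value $k(x)$ being non-zero, and contractive multipliers do, in general, have zeros. The actual proof avoids any extremal selection and gets the non-vanishing structurally: the lurking isometry is set up so as to produce a pair $(\varphi,\psi)$ such that the column with entries $\varphi,\psi$ is a \emph{contractive} multiplier from $\cH$ to $\cH\oplus\cH$, with $f\,(1-\psi)=\varphi$ and $\psi(x_0)=0$ at the normalization point; one then takes $k:=1-\psi$ and $h:=\varphi$. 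Non-vanishing of $k$ is then a Schwarz-lemma-type consequence of positivity: since $(1-\psi(x)\overline{\psi(y)})K(x,y)\succeq 0$, the $2\times 2$ submatrix at the points $x,x_0$, together with $K(x,x_0)=K(x_0,x_0)=1$ and $\psi(x_0)=0$, has determinant $(1-|\psi(x)|^2)K(x,x)-1\ge 0$, whence $1-|\psi(x)|^2\ge 1/K(x,x)>0$, so $|\psi(x)|<1$ and $k(x)=1-\psi(x)\ne 0$ for every $x$. Without this step (or a substitute for it), your argument establishes only that $f$ is a ratio of two multipliers, which is strictly weaker than the theorem and would not suffice for the application made of it in this paper (condition~(3) in the proof of Theorem~\ref{T:D} needs the denominators to be nowhere-vanishing).
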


The terminology is explained in \cite{AHMR17}, and further background may be found in the book \cite{AM02}.
For our purposes, it suffices to remark that (as pointed out in \cite{AHMR17})
the Dirichlet space $\cD$ satisfies the hypotheses
of the theorem, and thus we obtain the following corollary.

\begin{corollary}\label{C:AHMR}
Given $f\in\cD$, there exist $h,k\in\cM(\cD)$, with $k$ nowhere zero on $\DD$, such that $f=h/k$.
\end{corollary}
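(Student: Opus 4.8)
The plan is to derive Corollary~\ref{C:AHMR} as a direct specialization of Theorem~\ref{T:AHMR}, so the entire task reduces to verifying that the Dirichlet space $\cD$ meets the hypotheses of that theorem, namely that $\cD$ is a reproducing kernel Hilbert space whose kernel is normalized and has the complete Pick property. Once this is established, Theorem~\ref{T:AHMR} hands us the factorization $f=h/k$ with $h,k$ in the multiplier algebra and $k$ nowhere zero, and the multiplier algebra of $\cD$ as an RKHS is precisely $\cM(\cD)$ as defined in the excerpt, so no further translation is needed.

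First I would recall the reproducing kernel of $\cD$. With the norm $\|f\|_\cD^2=\|f\|_{H^2}^2+\cD(f)$, a monomial computation gives $\|z^n\|_\cD^2 = 1 + n$ for $n\ge 0$ (since $\|z^n\|_{H^2}^2=1$ and $\cD(z^n)=\frac{1}{\pi}\int_\DD n^2|z|^{2n-2}\,dA = n$), so the reproducing kernel is
\[
k(z,\lambda)=\sum_{n=0}^\infty \frac{(z\overline{\lambda})^n}{n+1}=\frac{1}{z\overline{\lambda}}\log\frac{1}{1-z\overline{\lambda}}.
\]
This kernel is already normalized in the RKHS sense, meaning $k(z,0)\equiv 1$ (equivalently $k(\cdot,\lambda)$ can be scaled so its value at a fixed base point is $1$); here $k(z,0)=1$ directly, so the normalization hypothesis holds with base point $0$.

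Next I would address the complete Pick property, which is the substantive hypothesis. Rather than verify the complete Pick inequalities from scratch, I would appeal to the standard criterion: a normalized kernel $k$ has the complete Pick property precisely when $1-1/k$ is positive semidefinite, i.e. when $1/k(z,\lambda)$ can be written as $1-\sum_{n\ge1} b_n (z\overline\lambda)^n$ with all $b_n\ge 0$. For the Dirichlet kernel one checks that $1/k$, or rather the relevant series expansion, has nonnegative coefficients, which is exactly the computation already carried out in the literature; I would simply cite \cite{AHMR17} (and the foundational \cite{AM02}) for the now-standard fact that the Dirichlet kernel is a complete Pick kernel. This is the only genuinely nontrivial point, and since the excerpt explicitly states that \cite{AHMR17} points out that $\cD$ satisfies the hypotheses, the cleanest route is to invoke that observation rather than reprove it.

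The main obstacle, such as it is, is purely expository: ensuring that the multiplier algebra appearing in Theorem~\ref{T:AHMR} coincides with $\cM(\cD)$ as defined earlier. These agree by definition, since the multiplier algebra of an RKHS $\cH$ is the set of holomorphic symbols $h$ with $hf\in\cH$ for all $f\in\cH$, which is verbatim the definition of $\cM(\cD)$. Thus no work beyond citation and the kernel identification is required, and the corollary follows immediately from Theorem~\ref{T:AHMR} applied to $\cH=\cD$.
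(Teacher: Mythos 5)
Your proposal is correct and follows essentially the same route as the paper, which likewise obtains the corollary by simply remarking (citing \cite{AHMR17}) that $\cD$ satisfies the hypotheses of Theorem~\ref{T:AHMR}. The extra detail you supply --- the computation $\|z^n\|_\cD^2=n+1$ giving the kernel $\sum_{n\ge0}(z\overline{\lambda})^n/(n+1)$, its normalization at $0$, and the deferral of the complete Pick property to the literature --- is accurate and merely makes explicit what the paper leaves as a citation.
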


\begin{proof}[Proof of Theorem~\ref{T:D}]
As proposed earlier, we apply Theorem~\ref{T:module}, taking $M=\cD$ and $A=\cM(\cD)$ and 
 $S$ to be the set of nowhere-vanishing functions in $\cD$. We first need to check that the conditions
 (1), (2) and (3) are satisfied.
 
For condition~(1), it suffices to show that every $f\in\cD$ can be written as $f=g_1+g_2$, where $g_1,g_2\in\cD$ and neither function $g_j$ has a zero in $\DD$. To this end, we remark that, if $f\in\cD$, then the area of its image $f(\DD)$ is bounded above by the Dirichlet integral $\cD(f)$, which is finite. Consequently, we can choose a complex number $\lambda\notin f(\DD)\cup\{0\}$, and then, setting $g_1:=\lambda$ and $g_2:=f-\lambda$, we have the required decomposition.
 
Condition (2) is obviously satisfied, since invertible elements of $\cM(\cD)$ must be everywhere non-zero on $\DD$.

To check condition~(3), let $g_1,g_2$ be nowhere-vanishing elements of $\cD$.
By Corollary~\ref{C:AHMR}, we can write them as $g_j=h_j/k_j$, where $h_1,k_1,h_2,k_2$ are nowhere-vanishing elements of $\cM(\cD)$. Set $a_1:=h_2k_1$ and $a_2:=h_1k_2$. These $a_1,a_2$ are nowhere-vanishing elements of $\cM(\cD)$ and $a_1g_1=a_2g_2$. Thus condition~(3) is satisfied.

By Theorem~\ref{T:module}, 
there exists a character $\chi$ on $\cM(\cD)$ such that
\begin{equation}\label{E:Hplinfun}
\Lambda(hf)=\chi(h)\Lambda(f) \qquad(f\in \cD,~h\in \cM(\cD)).
\end{equation}

Let $a:=\chi(u)$ (where $u$ denotes the function $u(z):=z$).
For all $\lambda\in\CC\setminus\DD$, 
the function $(u-\lambda 1)$ is is non-vanishing in $\DD$, 
so we have  $\Lambda(u-\lambda 1)\ne0$,
whence $\chi(u-\lambda 1)\ne0$ and  $a\ne\lambda$. 
In other words, $a\in\DD$.

To finish the proof, 
we show that $\Lambda(f)=f(a)$ for all $f\in \cD$.
Given $f\in \cD$,  let us define $f_1(z):=(f(z)-f(a))/(z-a)$. 
Then $f_1\in \cD$ and  $f=f(a)1+(u-a1)f_1$. 
Applying $\Lambda$ to both sides of this 
last identity and using \eqref{E:Hplinfun},
we obtain 
\[
\Lambda(f)=f(a)\Lambda(1)+\chi(u-a1)\Lambda(f_1)=f(a)+0,
\]
as desired. This completes the proof of Theorem~\ref{T:D}.
\end{proof}

\begin{proof}[Proof of Theorem~\ref{T:wco}]
Set $\psi:=T(1)$. Clearly $\psi\in\hol(\DD)$ and $\psi(z)\ne0$ for all $z\in\DD$.  
Set $\phi:=T(u)/\psi$, where $u$ is the function  $u(z):=z$. Then also $\phi\in\hol(\DD)$.
Fix $z\in\DD$ and consider the linear functional $\Lambda:\cD\to\CC$ defined by
$\Lambda(f):=(Tf)(z)/\psi(z)$. 
This satisfies the hypotheses of Theorem~\ref{T:D}, so by that theorem there exists $a\in\DD$
such that $\Lambda(f)=f(a)$ for all $f\in\cD$. In particular, taking $f=u$, we see that $\phi(z)=a$.
As this holds for each $z\in\DD$, we conclude that $\phi$ maps $\DD$ into $\DD$, 
and that $Tf(z)=\psi(z)f(\phi(z))$ for all $f\in\cD$ and all $z\in\DD$.
\end{proof}

\begin{proof}[Proof of Theorem~\ref{T:Dw}]
This is nearly the same as the proof of Theorem~\ref{T:D}, but with two differences.

Firstly, we need a new method for  checking condition~(1)
because $\cD_w$, unlike $\cD$, may contain surjective functions
(more on this in the next section).
The following argument was suggested to us by the referee.
Given $f\in\cD_w$, factor it as $f=hg$, where  $h$ is inner and $g$ outer.
By \cite[Chapter~IV, Theorem~3.4]{Al93}, we have $g\in\cD_w$.
Then  $f=(h-1)g+g$ is the sum of two nowhere-vanishing 
functions in $\cD_w$. Thus condition~(1) is verified.

Secondly, in checking condition (3), we need an analogue of Corollary~\ref{C:AHMR}
for the spaces $\cD_w$. This can be proved by combining Theorem~\ref{T:AHMR}
with a theorem of Shimorin \cite{Sh02} asserting that, for every positive superharmonic weight $w$,
the space $\cD_w$ has a complete Pick kernel.
\end{proof}

%%%%%%%%%%%%%%%%%%%%%%%%%%%%%%%%%%

\section{Proof of Theorem~\ref{T:surj}}\label{S:surj}

In the light of the proof of Theorem~\ref{T:Dw}, 
it is natural to wonder whether $\cD_w$ contains surjective functions.
Theorem~\ref{T:surj}, stated in the introduction, answers this question.
In this section, we prove this theorem.
The proof is based on the following fairly general lemma.

\begin{lemma}\label{L:surj}
Let $X$ be a Banach space of holomorphic functions on $\DD$.
Assume that convergence in the norm of $X$ implies local uniform convergence in $\DD$.
Suppose also that there exist a bounded, non-constant function $h\in X$, vanishing at $0$,
and automorphisms $(\phi_n)_{n\ge1}$ of $\DD$, such that $h\circ\phi_n\in X$ for all $n$ and
$\lim_{n\to\infty}\|h\circ\phi_n\|_X=0$. Then there exists a function $f\in X$ such that $f(\DD)=\CC$.
\end{lemma}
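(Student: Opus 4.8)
The plan is to construct $f$ explicitly as a series $f=\sum_{k\ge1}c_k\,(h\circ\phi_{n_k})$, with positive coefficients $c_k$ and indices $n_k$ chosen inductively, and to prove surjectivity by a winding–number argument. Writing $g_n:=h\circ\phi_n$, I will arrange $\|c_kg_{n_k}\|_X\le 2^{-k}$, so that the series converges in $X$ (hence $f\in X$) and, by the standing hypothesis, converges locally uniformly in $\DD$. Since each $\phi_n$ is a bijection of $\DD$, every $g_n$ is bounded by $M:=\sup_\DD|h|$, a fact I will use repeatedly. It suffices to show $f(\DD)\supseteq D(0,R_k)$ for $R_k:=k$, since then $f(\DD)=\CC$. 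The geometric input is a fixed circle around the zero of $h$ at the origin: as $h$ is non-constant, $0$ is an isolated zero, so I fix $r_0>0$ with $\overline{D(0,r_0)}\subset\DD$ and $h$ non-vanishing on $0<|w|\le r_0$. Then $m_0:=\min_{|w|=r_0}|h(w)|>0$, and by the argument principle $h(\{|w|=r_0\})$ has winding number $p\ge1$ about $0$ (the order of the zero of $h$ there); hence $c\,h(\{|w|=r_0\})$ winds $p$ times about every point of $D(0,cm_0)$. For each $k$ I will use the pulled-back contour $\Gamma_k:=\phi_{n_k}^{-1}(\{|w|=r_0\})$, a circle in $\DD$ on which $|g_{n_k}|\ge m_0$ identically and $g_{n_k}(\Gamma_k)=h(\{|w|=r_0\})$.

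The inductive step is where the real work lies. Suppose $c_1,n_1,\dots,c_{k-1},n_{k-1}$, and hence $F_{k-1}:=\sum_{i<k}c_ig_{n_i}$ and the contours $\Gamma_1,\dots,\Gamma_{k-1}$, are chosen. Because each $g_{n_i}$ is bounded by $M$, the partial sum obeys a location-independent bound $|F_{k-1}|\le B_{k-1}:=M\sum_{i<k}c_i$. I first pick $c_k$ so large that $c_km_0>R_k+B_{k-1}+2$, and then pick $n_k$ so large that simultaneously $\|c_kg_{n_k}\|_X\le 2^{-k}$ and $\sup_{\Gamma_1\cup\cdots\cup\Gamma_{k-1}}|c_kg_{n_k}|\le 2^{-k}$; both are possible because $\|g_n\|_X\to0$, which by hypothesis also forces $g_n\to0$ uniformly on every fixed compact subset of $\DD$. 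This ordering is essential: $c_k$ is fixed before $\Gamma_k$ is known, and the bound on $F_{k-1}$ is available precisely because $h$ is bounded.

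It remains to verify the covering property on $\Gamma_k$. For $w\in D(0,R_k)$ and $\zeta\in\Gamma_k$ one has $|c_kg_{n_k}(\zeta)-w|\ge c_km_0-R_k$, while $|F_{k-1}(\zeta)|+\sum_{j>k}|c_jg_{n_j}(\zeta)|\le B_{k-1}+2^{-k}$, the tail being controlled because each later term was made small on the (now fixed) compact set $\Gamma_k$ at its own step. The inequality $c_km_0>R_k+B_{k-1}+2$ then gives $|f-c_kg_{n_k}|<|c_kg_{n_k}-w|$ on $\Gamma_k$, so by Rouch\'e the winding number of $f(\Gamma_k)$ about $w$ equals that of $c_k\,h(\{|w|=r_0\})$, namely $p\ge1$. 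Since $f$ is holomorphic on the region bounded by $\Gamma_k$, the argument principle yields $w\in f(\DD)$. As $w\in D(0,R_k)$ was arbitrary and $R_k\to\infty$, this proves $f(\DD)=\CC$.

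I expect the main obstacle to be exactly the simultaneous domination in the last paragraph: one must make the $k$-th term overwhelm both the finitely many earlier terms \emph{and} the entire infinite tail on a single contour, despite the competing requirement that every term have tiny $X$-norm (which by itself forces smallness on any \emph{fixed} compact set). The device that breaks this deadlock is to transport a fixed circle around the zero of $h$ in the range to a contour $\Gamma_k$ in the domain that necessarily drifts toward $\partial\DD$ as $n_k\to\infty$: there the dominant term enjoys the uniform lower bound $c_km_0$, the earlier terms stay bounded only because $h$ is bounded, and the tail is negligible because $\Gamma_k$ is, after its construction, a fixed compact subset of $\DD$.
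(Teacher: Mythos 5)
Your proof is correct and follows essentially the same route as the paper's: a series $\sum c_k (h\circ\phi_{n_k})$ with rapidly growing coefficients, later terms forced to be small on the earlier pulled-back contours via local uniform convergence, and Rouch\'e applied on the contour $\phi_{n_k}^{-1}(\{|w|=r_0\})$ where the dominant term has the uniform lower bound $c_k m_0$. The only cosmetic differences are that the paper fixes the geometric ratio $\lambda:=2+M/m$ in advance rather than choosing the $c_k$ inductively, and phrases Rouch\'e via zero-counting rather than winding numbers.
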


\begin{proof}
By the principle of isolated zeros, there exists $r\in(0,1)$ such that $m:=\min_{|z|=r}|h(z)|>0$.
Set $M:=\sup_{z\in\DD}|h(z)|$ and $\lambda:=2+M/m$.
Replacing $(\phi_n)$ by a subsequence,  we may suppose that, for all $n\ge1$,
\[
\|h\circ\phi_n\|_X<2^{-n}\lambda^{-n}.
\]

For each automorphism $\phi$ of $\DD$, let us write $D_\phi:=\phi^{-1}({\DD_r})$, where $\DD_r:=\{z:|z|<r\}$.
Replacing $(\phi_n)$ by a further subsequence, if necessary, we may suppose that, for all $n\ge2$,
\[
\max\{|h\circ\phi_n(z)|:z\in\overline{D}_{\phi_1}\cup\dots\cup\overline{D}_{\phi_{n-1}}\}<2^{-n}\lambda^{-n}.
\]
This is possible because, since $\|h\circ\phi_n\|_X\to0$, it follows that $h\circ\phi_n\to0$ locally
uniformly on $\DD$.

Define $f:\DD\to\CC$ by
\[
f:=\sum_{n\ge1}\lambda^n(h\circ\phi_n).
\]
Since $\sum_{n\ge1}\|\lambda^n(h\circ\phi_n)\|_X\le \sum_{n\ge1}2^{-n}<\infty$,
the series defining $f$ converges in the norm of $X$, hence also locally uniformly on $\DD$. 
In particular, we have $f\in X$.

We now show that $f(\DD)=\CC$.
Let $w\in\CC$. Since $\lambda>1$ and $m>M/(\lambda-1)$, we may choose $n$ large enough so that
\[
|w|+1<\lambda^n\Bigl(m-\frac{M}{\lambda-1}\Bigr).
\]
Fix this $n$ and set  
\[
F:=\lambda^n(h\circ \phi_n) 
\quad\text{and}\quad
G:=f-\lambda^n(h\circ\phi_n)-w=\sum_{k\ne n}\lambda^k(h\circ\phi_k)-w.
\]
Clearly both $F$ and $G$ are holomorphic on $\DD$. Further, we have
\[
\min_{z\in \partial D_{\phi_n}}|F(z)|
=\min_{z\in \phi_n^{-1}(\partial \DD_r)}\lambda^n |h\circ\phi_n(z)|
=\min_{|w|=r}\lambda^n|h(w)|
=\lambda^n m,
\]
and
\begin{align*}
\max_{z\in \partial D_{\phi_n}}|G(z)|
&\le\max_{z\in \partial D_{\phi_n}}\Bigl(\sum_{k<n}\lambda^k|h\circ\phi_k(z)|+\sum_{k>n}\lambda^k|h\circ\phi_k(z)|+|w|\Bigr)\\
&\le \sum_{k<n}\lambda^k M+\sum_{k>n}2^{-k}+|w|\\
&\le \frac{\lambda^nM}{\lambda-1}+1+|w|.
\end{align*}
By our choice of $n$, it follows that $\max_{\partial D_{\phi_n}}|G|<\min_{\partial D_{\phi_n}}|F|$.
Therefore, by Rouch\'e's theorem, $F$  and $F+G$ have the same number of zeros in $D_{\phi_n}$.
Now $F$ has at least one zero there, since $\phi_n^{-1}(0)\in D_{\phi_n}$ and
\[
F(\phi_n^{-1}(0))=\lambda^n (h\circ\phi_n)(\phi_n^{-1}(0))=\lambda^n h(0)=0.
\]
Therefore $F+G$ has at least one zero in  $D_{\phi_n}$. Since $F+G=f-w$, this implies that $w\in f(D_{\phi_n})$.
In particular $w\in f(\DD)$.
\end{proof}

\begin{proof}[Proof of Theorem~\ref{T:surj}]
The `only if' is easy. Indeed, if $\inf_{z\in\DD}w(z)>0$, then $\cD_w\subset\cD$,
and, as already observed, $\cD$ contains no surjective functions.

We now turn to the `if'. Suppose that $\inf_{z\in\DD}w(z)=0$.
We are going to check that the hypotheses of Lemma~\ref{L:surj} are satisfied.
Clearly $\cD_w$ is a Banach space in which norm convergence implies local uniform convergence.
Since $\inf_{z\in\DD}w(z)=0$,
there exists a sequence $(a_n)$ in $\DD$ such that $w(a_n)\to0$.
Replacing $(a_n)$ by a subsequence, 
we can suppose that $(a_n)$ converges to some limit $a\in\overline{\DD}$.
If $a\in\DD$, then by lower semicontinuity of $w$ we have $w(a)=0$, contradicting positivity of~$w$;
so $a\in\partial \DD$.
Define $h(z):=z(a-z)$ and $\phi_n(z):=(a_n-z)/(1-\overline{a}_nz)$.
Clearly $h$ is bounded and $h(0)=0$. Also
\[
\|h\circ\phi_n\|_{H^2}^2=\|\phi_n(a-\phi_n)\|_{H^2}^2=2-2\Re\langle a,\phi_n\rangle=2-2\Re (a\overline{a}_n)\to0.
\]
Further, we have
\begin{align*}
\cD_w(h\circ\phi_n)
&=\frac{1}{\pi}\int_\DD|(h\circ\phi_n)'(z)|^2w(z)\,dA(z)\\
&=\frac{1}{\pi}\int_\DD |h'(\zeta)|^2(w\circ\phi_n^{-1})(\zeta)\,dA(\zeta)\\
&\le \frac{1}{\pi}\int_\DD 9(w\circ\phi_n^{-1})(\zeta)\,dA(\zeta)\\
&\le 9(w\circ\phi_n^{-1})(0)=9w(a_n)\to0,
\end{align*}
where the final inequality arises from the fact that $w\circ\phi_n^{-1}$ is a superharmonic function on $\DD$.
Hence
\[
\|h\circ\phi_n\|_{\cD_w}^2=\|h\circ\phi_n\|_{H^2}^2+\cD_w(h\circ\phi_n)\to0.
\]
Thus the hypotheses of Lemma~\ref{L:surj} are satisfied, and we deduce that $\cD_w$
contains a function $f$ such that $f(\DD)=\CC$. This completes the proof.
\end{proof}

%%%%%%%%%%%%%%%%%%%%%%%%%%%%%%%%%%

\section{Concluding remarks}\label{S:conclusion}

There is a version of Theorem~\ref{T:D} for Hardy spaces. The following result was obtained in \cite{MR15}.

\begin{theorem}[\protect{\cite[Theorem~2.1]{MR15}}]\label{T:Hp}
Let  $0<p\le\infty$ and let $\Lambda:H^p\to\CC$ be a linear functional  
such that $\Lambda(1)=1$ and  $\Lambda(g)\ne0$ for all outer functions $g\in H^p$.
Then there exists $a\in \DD$ such that
$\Lambda(f)=f(a)$ for all $f\in H^p$.
\end{theorem}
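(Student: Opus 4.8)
The plan is to follow the same strategy as in the proof of Theorem~\ref{T:D}, applying the abstract module result Theorem~\ref{T:module} with $M=H^p$, with $A$ equal to the multiplier algebra of $H^p$ (which is $H^\infty$), and with $S$ taken to be the set of outer functions in $H^p$. The module structure is given by pointwise multiplication, and the hypothesis that $\Lambda(g)\ne0$ for all outer $g$ is exactly the requirement that $\Lambda(s)\ne0$ for all $s\in S$. It then remains to verify conditions (1), (2) and (3). A useful preliminary remark is that, in contrast to the Dirichlet case, this framework applies verbatim even when $0<p<1$: Theorem~\ref{T:module} requires only that $M$ be a left $A$-module, with no topology on $M$, while $A=H^\infty$ is a genuine Banach algebra regardless of $p$.

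For condition~(1), the key simplification over the Dirichlet space is that inner functions, being bounded, lie in $H^\infty=A$. Consequently the canonical inner--outer factorization $f=hg$ of a function $f\in H^p$, with $h$ inner and $g$ outer, already displays $f$ as the element $h\cdot g$ of $A\cdot S$; hence $S$ generates $M$, and there is no need for the sum decomposition used in Theorem~\ref{T:D}. For condition~(2), I would use the fact that the invertible elements of $H^\infty$ are precisely the functions bounded away from $0$ in modulus; such a function has neither a Blaschke nor a singular inner factor, so it is outer, and its product with any outer function of $H^p$ is again outer and lies in $H^p$.

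The main work, as in Theorem~\ref{T:D}, lies in condition~(3). Given outer functions $g_1,g_2\in H^p$ with boundary values $g_1^*,g_2^*$, I would construct a common multiple directly on the boundary: let $G$ be the outer function whose modulus on $\partial\DD$ equals $\min(|g_1^*|,|g_2^*|)$. One must check that $\log\min(|g_1^*|,|g_2^*|)\in L^1(\partial\DD)$; its negative part is dominated by $\log^-|g_1^*|+\log^-|g_2^*|$ and its positive part by $\log^+|g_1^*|$, both integrable, so $G$ is a bona fide outer function, and the inequality $|G|\le|g_1^*|$ shows $G\in H^p$. Setting $a_1:=G/g_1$ and $a_2:=G/g_2$, these are outer with boundary modulus at most $1$, hence bounded outer functions in $H^\infty$; in particular $a_jS\subset S$, and by construction $a_1g_1=G=a_2g_2$. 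This is the step I expect to be the principal obstacle, the delicate point being the integrability of $\log\min(|g_1^*|,|g_2^*|)$, which is what guarantees that $G$ is genuinely outer rather than merely of bounded type.

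With the three conditions verified, Theorem~\ref{T:module} furnishes a character $\chi$ on $H^\infty$ satisfying $\Lambda(hf)=\chi(h)\Lambda(f)$ for all $h\in H^\infty$ and $f\in H^p$. From here the argument is identical to the Dirichlet case. Setting $a:=\chi(u)$ with $u(z):=z$, and noting that $u-\lambda 1$ is outer for every $\lambda\in\CC\setminus\DD$, we get $\Lambda(u-\lambda 1)\ne0$, whence $\chi(u-\lambda 1)\ne0$ and $a\ne\lambda$; thus $a\in\DD$. Finally, for $f\in H^p$ one writes $f=f(a)1+(u-a1)f_1$, where $f_1(z):=(f(z)-f(a))/(z-a)$ again lies in $H^p$, and applying $\Lambda$ together with $\chi(u-a1)=\chi(u)-a=0$ yields $\Lambda(f)=f(a)$, as required.
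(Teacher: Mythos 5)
Your proposal is correct and follows exactly the route the paper indicates for this result (which it cites from \cite{MR15} rather than proving in full): apply Theorem~\ref{T:module} with $M=H^p$, $A=H^\infty$, and $S$ the outer functions, using inner--outer factorization for condition~(1) and an explicit bounded outer common multiple for condition~(3), then conclude as in the proof of Theorem~\ref{T:D}. The delicate points you flag --- the integrability of $\log\min(|g_1^*|,|g_2^*|)$, the outerness of $u-\lambda 1$ for $|\lambda|\ge 1$, and the fact that invertibles in $H^\infty$ are outer --- are all handled correctly.
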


A comparison of Theorems~\ref{T:Hp} and \ref{T:D} reveals that these results are not exact analogues
of one another. Indeed, in Theorem~\ref{T:D} we suppose that $\Lambda$ is non-zero on nowhere-vanishing functions, whereas in Theorem~\ref{T:Hp} it suffices to assume that $\Lambda$ is non-zero on the (strictly smaller) class of outer functions. Why the difference?

The Hardy-space case is much easier to treat, because the multiplier algebra is exactly equal to $H^\infty$, and there is a satisfactory factorization theory (inner-outer factorization) that makes it easy to check conditions (1)--(3) of Theorem~\ref{T:module}. In particular, it allows us to prove Theorem~\ref{T:Hp} under the weaker outer-function assumption.

In the case of the Dirichlet space, although the outer factor of a function in $\cD$ still belongs to $\cD$, the inner factor need not belong to $\cD$, still less to $\cM(\cD)$. (In fact, the only inner functions that belong to $\cD$ are finite Blaschke products \cite[Corollary~7.6.10]{EKMR14}.) This explains why we need the  factorization result Theorem~\ref{T:AHMR} (a deep theorem, based on the so-called realization formula for spaces with complete Pick kernels), and also why we  resort to the trick of exploiting the fact that the  Dirichlet space contains no surjective functions. To extend out results, it would be helpful to answer some of the following questions, which we believe are of interest in their own right.

\begin{questions}
Let $f\in\cD$.
\begin{enumerate}[(1)]
\item Can we write  $f=\sum_{j=1}^n h_jg_j$, where $h_j\in\cM(\cD)$ and $g_j\in\cD$, and with the $g_j$ being outer functions? 
\item Can we even take the $g_j$ to be cyclic for $\cD$? 
\item Is this possible even with $n=1$?
\item What if we replace $\cD$ by $\cD_w$, where $w$ is a superharmonic weight? 
\end{enumerate}
\end{questions}

Looking beyond the Dirichlet space, it would certainly be of interest to answer the analogous questions for other function spaces too.

\section*{Acknowledgements}
We are grateful to the anonymous referee for suggesting the argument used in
proving Theorem~\ref{T:Dw}.

%%%%%%%%%%%%%%%%%%%%%%%%%%%%%%%%%%

\end{document}